\title{Representation of $2\times 2$ matrices over perfect fields by a diagonal quadratic form}
\author{
  Murtuza Nullwala\\
  \small \text{Ramrao Adik Institute of Technology,}\\
  \small \text{Nerul, 400706.}
  \and
  Anuradha S. Garge\\
  \small \text{University of Mumbai,}\\
  \small \text{Mumbai, 400098.}
}
\date{} 
\theoremstyle{plain}
\theoremstyle{definition}
\newtheorem{T}{Theorem}
\newtheorem{D}[T]{Definition}
\newtheorem{Lem}[T]{Lemma}
\newtheorem{Exa}[T]{Example}
\newtheorem{Rem}[T]{Remark}
\numberwithin{equation}{T}
\begin{document}

\maketitle

\begin{abstract}
  Let $F$ be a perfect field. In this paper, we show that a diagonal quadratic form
  $\sum_{i=1}^{m}a_iX_i^2$ over $F$ is universal over $M_2(F)$ if and only if atleast two of $a_1, a_2,\ldots, a_m$ are
  non-zero. 
\end{abstract}

\vskip3mm
\noindent
Mathematics Subject Classification 2010: 11E04, 15B33.
\vskip3mm
\noindent
Key words: Quadratic forms, Perfect fields, Universal.
\vspace*{0.1in}

\section{Introduction}
Let $R$ denote a commutative ring with unity. Let $M_n(R)$ denote the set of all $n\times n$ matrices over $R$. An $n-$ary quadratic form over $R$ is a homogeneous polynomial $q$ of degree $2$ in $n$ variables. Thus $q$ is of the form:
\begin{center}
$q(X_1,X_2,\ldots,X_n)=\sum_{i,j=1}^{n}a_{ij}X_iX_j$, where $a_{ij}\in R$.
\end{center}

The study of representation of all positive integers by a quadratic form is a classical one. In 1770, Lagrange was the first mathematician to show that any positive integer can be represented by the quadratic form
\begin{equation}
X_1^2+X_2^2+X_3^2+X_4^2.
\end{equation}
In 1916, S. Ramanujan enlisted $54$ (see \cite{Manjul}) diagonal forms which represent all positive integers. If $q(X_1,X_2,\ldots,X_n)=\sum_{i,j=1}^{n}a_{ij}X_iX_j\in \mathbb{Z}[X_1,X_2,\ldots,X_n]$ is a quadratic form then we can associate a symmetric matrix $M_q$ of order n, defined as $M_q=(m_{ij})$ where $m_{ij}=\dfrac{1}{2}(a_{ij}+a_{ji})$. 
\begin{D}
(I) A quadratic form $q$ over $\mathbb{Z}$ is said to have integer matrix $M_q$ if all entries of $M_q$ are integers.\\ 
(II) A quadratic form is said to be positive definite if $q(x_1,x_2,\ldots,x_n)>0$, $x_i\in\mathbb{Z}$ and atleast one $x_i\neq 0$ for some $i,\;1\leq i\leq n$. 
\end{D}

\begin{D}
  A diagonal quadratic form $\sum_{i=1}^{m}a_iX_i^2$, where $a_i\in F$ is said to be universal over $F$ if it represents every
  element of $F.$
\end{D}

In 2000, Manjul Bhargava (see \cite{Manjul}) proved an interesting criterion called the fifteen theorem, to decide when a positive definite quadratic form with integer matrix represents all positive integers. We recall this theorem below:
\begin{T}
If a positive definite quadratic form having an associated integer matrix represents every positive integer up to fifteen, then it represents every positive integer.
\end{T}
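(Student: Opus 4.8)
The plan is to follow Bhargava's method of \emph{escalator lattices}. Since $q$ has an integer matrix $M_q$, it is the Gram matrix of a positive-definite classically integral lattice $(L,q)$, and the assertion that $q$ represents an integer $n$ is exactly that $n=q(v)$ for some lattice vector $v$. First I would organize the relevant lattices into a rooted tree. The root is the zero lattice, whose \emph{truant} — the least positive integer it fails to represent — is $1$. Given any node $L$ with truant $t$, I form all its \emph{escalations}: the rank-one extensions $L' = L + \mathbb{Z}v$ whose Gram matrix extends that of $L$ by a single integer row and column and which now represent $t$. Positive-definiteness, together with integrality of $M_q$, forces $q(v)$ and the off-diagonal entries of $v$ against a reduced basis of $L$ into a bounded range, so each node has only finitely many children and the whole tree is locally finite.

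The key structural claim I would prove is that this escalation tree is in fact finite and of bounded rank: after adjoining at most four or five vectors one always reaches a \emph{universal} node (a lattice with no truant). The mechanism is that the truant of an escalator lattice is controlled by its first few diagonal entries; a Minkowski-type reduction bound shows that once the rank is large enough the successive minima are too small to leave any gap among the small integers. Carrying out the enumeration, every truant that actually occurs along the way turns out to lie in the finite set $\{1,2,3,5,6,7,10,14,15\}$, whose largest element is $15$. Consequently, if $q$ represents each of these nine critical values — in particular if it represents every integer from $1$ through $15$ — then $q$ cannot occupy any non-universal node of the tree and must itself realize a universal leaf.

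The remaining step is to verify that each universal leaf genuinely represents \emph{all} positive integers, not merely the critical ones used to reach it. The leaves are quaternary (or quinary), and here the analytic input enters: by Hasse--Minkowski a form represents $n$ over $\mathbb{Q}$ precisely when it does so over every completion, and by Tartakowsky's theorem a positive-definite form in four or more variables represents every sufficiently large integer that it represents locally; one then clears the finitely many small exceptions by direct computation. A genuine subtlety, which I would already confront when computing the truants of the \emph{intermediate} nodes, is that the ternary nodes resist a clean bound: positive-definite ternary forms can omit entire square classes of integers, so determining their represented values (and hence their truants) demands a delicate prime-by-prime genus analysis.

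I expect the main obstacle to be precisely the termination-and-bounding claim of the second paragraph: showing that the escalation process cannot continue indefinitely and that no truant exceeding $15$ ever appears. This is not a single inequality but a finite yet substantial case analysis over the nodes of the tree, intertwined with the ternary difficulty just described, and it is exactly the part that singles out $15$ — rather than some larger threshold — as the correct bound.
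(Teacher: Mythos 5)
This statement is Bhargava's Fifteen Theorem, which the paper merely \emph{recalls} from \cite{Manjul} and does not prove; there is therefore no internal proof to compare you against, and the only sensible benchmark is Bhargava's own argument. Your outline follows that argument faithfully: the escalation tree rooted at the zero lattice, the notion of the truant, the local finiteness of the tree via positive-definiteness and reduction bounds, the emergence of the nine critical truants $\{1,2,3,5,6,7,10,14,15\}$, the verification of universality of the terminal lattices via local-global methods plus Tartakowsky-type results for quaternary forms, and the genuine difficulty posed by the ternary nodes. As a description of the correct strategy, this is accurate.

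However, as a \emph{proof} it is incomplete in exactly the place you yourself flag. The entire content of the theorem lives in the finite-but-substantial enumeration: one must actually list the escalator lattices at each stage (four at rank one, on the order of dozens at ranks two and three, a couple of hundred at rank four), compute the truant of each non-universal node, and verify that every truant encountered lies in the critical set and that every leaf is universal. None of that is carried out or even reduced to a checkable finite procedure in your writeup; phrases such as ``a Minkowski-type reduction bound shows'' and ``carrying out the enumeration, every truant turns out to lie in'' assert the conclusions of the hard computations rather than establish them. In particular the claim that no truant exceeding $15$ ever appears --- the single fact that makes the number $15$ correct rather than some larger bound --- is stated as the expected outcome of an analysis you have not performed. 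So the proposal is a correct roadmap to Bhargava's proof, but it does not constitute a proof; to complete it you would need to either execute the case analysis or cite it, which is precisely what the paper does by referring the reader to \cite{Manjul}.
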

In 1986, Vaserstein (see \cite{Vas}) proved that for $n \geq 2$, every $n\times n$ integral matrix is a
sum of three squares i.e., the quadratic form $X_1^2 + X_2^2 + X_3^2$ is universal over $M_n(\mathbb{Z}).$
In 2017, Jungin Lee (see \cite{Lee}) gave a criterion to check when a diagonal quadratic form $\sum_{i=1}^{n}a_iX_i^2$, $a_i\in \mathbb{Z}$ represents all matrices in $M_2(\mathbb{Z})$ i.e. a criterion to check when $\sum_{i=1}^{n}a_iX_i^2$ is universal over
$M_2(\mathbb{Z}).$ We recall below his theorem:

\begin{T}
  A quadratic form $\sum_{i=1}^{m}a_iX_i^2,$ $a_i \in \mathbb{Z}$ is universal over $M_2(\mathbb{Z})$ if and only
  if there is no prime which divides $m-1$ numbers of $a_1,a_2,\ldots, a_m$ and there exist three numbers of $a_1,a_2,\ldots,a_m$ which are not multiples of $4$.
\end{T}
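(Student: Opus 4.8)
The plan is to prove both implications by reducing universality over $M_2(\mathbb{Z})$ to a family of local ($p$-adic) representability questions and then analysing each prime separately. The central tool throughout is the Cayley--Hamilton identity $B^2 = (\operatorname{tr} B)\,B - (\det B)\,I$ for $B \in M_2$, which makes the squaring map on $2\times 2$ matrices completely explicit, together with the observation that there is no archimedean obstruction: a rotation $B$ satisfies $B^2 = -I$, so squares of real matrices are already sign-indefinite and the real place imposes no condition (this is exactly why the criterion is purely congruential, with no sign hypothesis). Thus I would first establish that $A = \sum_{i=1}^m a_i B_i^2$ is solvable with $B_i \in M_2(\mathbb{Z})$ if and only if it is solvable with $B_i \in M_2(\mathbb{Z}_p)$ for every prime $p$; one direction is immediate, and the reverse requires globalising local solutions, which I would handle by a strong-approximation / explicit patching argument on the system of four quadratic equations in $4m$ unknowns.

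For necessity of the first condition, suppose a prime $p$ divides at least $m-1$ of the $a_i$. If $p$ divides all $m$ of them, then $\sum a_i B_i^2 \equiv 0 \pmod p$, so $I$ is never represented. If $p$ divides exactly $m-1$ of them, then modulo $p$ the form collapses to a single scaled square $a_j B_j^2$ with $p \nmid a_j$; but the nonzero nilpotent $N = \left(\begin{smallmatrix} 0 & 1 \\ 0 & 0 \end{smallmatrix}\right)$ is not a square in $M_2(\mathbb{F}_p)$ (if $B^2 = N$ then $B^4 = N^2 = 0$, so $B$ is nilpotent and hence $B^2 = 0 \ne N$), and a lift of $a_j N$ is therefore not represented. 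Hence universality forces every prime to leave at least two coefficients $p$-adic units, which is precisely the first condition. For necessity of the second condition I would run the analogous obstruction at $p = 2$: a direct computation of the image of $\sum a_i B_i^2$ in $M_2(\mathbb{Z}/4)$ shows that if fewer than three of the coefficients have $2$-adic valuation at most $1$ (i.e.\ are not multiples of $4$), then some residue class modulo $4$ is missed.

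For sufficiency I would separate the odd primes from $p = 2$. At an odd prime $p$, the first condition furnishes two $p$-adic units among the coefficients, and I would show that $a_1 B_1^2 + a_2 B_2^2$ already surjects onto $M_2(\mathbb{Z}_p)$: reduce the target modulo $p$, solve in $M_2(\mathbb{F}_p)$, and lift by Hensel's lemma, where smoothness is governed by the Sylvester-type derivative $X \mapsto XB + BX$, which is invertible unless $B$ has two eigenvalues summing to zero — a locus the two free summands allow me to avoid. The prime $2$ is the main obstacle, precisely because in characteristic $2$ the derivative $X \mapsto XB + BX$ is always singular (its diagonal eigenvalues $2\lambda$ vanish), so naive Hensel lifting collapses and a genuinely finer analysis modulo $4$ and $8$ is required; there the second condition (three coefficients of valuation at most $1$) is exactly what lets one cover $M_2(\mathbb{Z}/4)$ and then bootstrap to a full $2$-adic solution. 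Combining local solvability at every prime with the globalisation step from the first paragraph then yields universality. I expect the $2$-adic computation and the patching of local solutions into a global one to be the two delicate points, with the odd-prime case being essentially formal.
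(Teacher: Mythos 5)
First, a point of reference: the paper does not prove this statement at all --- it is Theorem~2 of the introduction, quoted verbatim from Jungin Lee's paper \cite{Lee} as motivation. So there is no in-paper proof to compare against; your proposal can only be judged against what a complete argument would require, and against Lee's actual (elementary, constructive) method.

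Judged that way, your proposal has genuine gaps at exactly the two places you yourself flag as ``delicate.'' The necessity arguments for odd primes are correct and complete (the observation that a nonzero nilpotent $N$ satisfies $B^2=cN \Rightarrow B^4=0 \Rightarrow B$ nilpotent $\Rightarrow B^2=0$ is a clean obstruction), and the remark that $B^2=-I$ is solvable integrally so there is no archimedean condition is a nice sanity check. But everything else is promised rather than proved. (i) The reduction of universality over $M_2(\mathbb{Z})$ to solvability over every $M_2(\mathbb{Z}_p)$ is not a known general principle you can invoke: the representation problem here is a \emph{system} of four quadratic equations in $4m$ unknowns, not a single quadratic form in many variables, and the Hasse principle routinely fails for such systems. ``Strong approximation / explicit patching'' is not an argument until you name the group or variety on which strong approximation is being applied and verify its hypotheses; this step is the entire burden of the sufficiency direction and it is missing. (ii) The necessity of the ``three coefficients not divisible by $4$'' condition is asserted as ``a direct computation shows,'' but that computation is the actual content --- one needs the structure of squares in $M_2(\mathbb{Z}/4)$ (e.g.\ that $B^2$ mod $2$ is either scalar or has odd trace, and its refinement mod $4$) to exhibit the missed residue class when only two coefficients have $2$-adic valuation $\le 1$. (iii) The entire $2$-adic sufficiency analysis, where Hensel lifting degenerates as you correctly note, is deferred. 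Lee's own proof sidesteps the local--global machinery entirely by explicit integral constructions building on Vaserstein's three-squares theorem \cite{Vas} and Griffin--Krusemeyer \cite{Griffin}; if you want to pursue your route, the globalisation step and the mod-$4$/mod-$8$ analysis must be written out in full before this can be called a proof.
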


We now consider quadratic forms over a field with characteristic not equal to $2$. Then to every quadratic form $q$ we can still associate a symmetric matrix $M_q$. One has the following result for the universality of the quadratic form $q$ (see \cite{Lam}):

\begin{T}
   Let $F$ be a field of characteristic not equal to $2$.
   If for a quadratic form $q$ over $F,$ $q(x_1, x_2, \ldots, x_n)= 0$ for some
   $x_1, x_2,\ldots, x_n \in F$ with atleast one $x_i\neq 0$ and the determinant of the associated matrix $M_q$
   is non-zero then $q$ is universal over $F.$ 
\end{T}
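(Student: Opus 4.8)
The plan is to translate the statement into the language of the symmetric bilinear form associated to $q$ and then exhibit every value of $F$ explicitly along a suitable line. Write $q(x) = x^{T} M_q\, x$ for $x \in F^{n}$, and let $B(x,y) = x^{T} M_q\, y$ be the associated symmetric bilinear form, so that $q(x) = B(x,x)$ and $B(x,y) = B(y,x)$. The hypothesis that the determinant of $M_q$ is non-zero means precisely that $M_q$ is invertible, equivalently that $B$ is non-degenerate: for every non-zero $v \in F^{n}$ there exists $u \in F^{n}$ with $B(v,u) \neq 0$. The isotropy hypothesis supplies a non-zero vector $v$ with $q(v) = 0$. The strategy is to combine these two inputs to reduce the problem to a one-parameter \emph{linear} family of values, which is automatically surjective.

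First I would invoke non-degeneracy applied to the isotropic vector $v$: since $v \neq 0$ and $M_q$ is invertible, there is some $u \in F^{n}$ with $B(v,u) \neq 0$. I would note in passing that $v$ and $u$ are then linearly independent, because $B(v,v) = q(v) = 0$ while $B(v,u) \neq 0$ forces $u$ not to be a scalar multiple of $v$; this is reassuring but not strictly needed for the computation.

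Next I would compute $q$ on the line through $u$ in the direction $v$. Expanding by bilinearity and symmetry gives, for any scalar $a \in F$,
\[
q(av + u) = a^{2}\,q(v) + 2a\,B(v,u) + q(u) = 2a\,B(v,u) + q(u),
\]
where the term $a^{2} q(v)$ vanishes by isotropy. Since $\operatorname{char} F \neq 2$ and $B(v,u) \neq 0$, the coefficient $2\,B(v,u)$ is a non-zero element of $F$, so the right-hand side is a non-constant affine function of $a$. Hence for any prescribed $c \in F$ I can solve $2a\,B(v,u) + q(u) = c$ by taking $a = \dfrac{c - q(u)}{2\,B(v,u)}$, and then $q(av + u) = c$. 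As $c$ was arbitrary, $q$ represents every element of $F$, i.e.\ $q$ is universal over $F$.

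The only genuinely substantive step is the second one: extracting the companion vector $u$ with $B(v,u) \neq 0$, which is exactly where the hypothesis $\det M_q \neq 0$ is consumed. Everything after that is a routine linear parametrization, and the role of the characteristic assumption is merely to guarantee that the leading coefficient $2\,B(v,u)$ does not collapse to zero.
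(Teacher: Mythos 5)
Your proof is correct. Note that the paper itself does not prove this statement at all: it is quoted as a known background result with a citation to Lam's book, so there is no in-paper argument to compare against. Your argument is the standard direct one: use $\det M_q \neq 0$ to produce $u$ with $B(v,u)\neq 0$ for the given isotropic vector $v$, expand $q(av+u)=a^{2}q(v)+2aB(v,u)+q(u)=2aB(v,u)+q(u)$, and observe that this affine function of $a$ is surjective because its leading coefficient $2B(v,u)$ is nonzero in characteristic $\neq 2$. Every step checks out, and you correctly identify where each hypothesis is used. For comparison, the proof in Lam's book usually takes a slightly different route: a regular isotropic form splits off a hyperbolic plane $\langle 1,-1\rangle$, which is universal via the identity $c=\left(\tfrac{c+1}{2}\right)^{2}-\left(\tfrac{c-1}{2}\right)^{2}$. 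Your version reaches the same conclusion without invoking Witt decomposition, which makes it more self-contained and elementary; the hyperbolic-plane route buys the stronger structural fact that the form actually contains a universal binary subform.
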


In the light of the paper by Jungin Lee, S. A. Katre posed the following question: given a diagonal quadratic form
$a_1X_1^2+a_2X_2^2+\cdots +a_mX_m^2$ with $a_i \in F,$ when is it universal over $M_n(F),$ $n \geq 2$?
In this paper, we answer the above question for $n = 2$ by proving the following result: 

\noindent {\bf{Theorem:}}
A diagonal quadratic form $\sum_{i=1}^{m}a_iX_i^2$ over a perfect field $F$ is universal over $M_2(F)$ if and only if atleast
two of $a_1, a_2, \ldots, a_m$ are nonzero.

The advantage of the above result is that one does not have to go to the matrix associated with the quadratic form. From the coefficients of the quadratic form itself, we can easily check the universality.

\section{Proof of the main result}
We prove our main result by proving the following sequence of theorems.

\begin{T} \label{T2}
A quadratic form $aX^2$ where $a\in F$ is not universal over $M_2(F)$.
\end{T}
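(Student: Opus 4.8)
The plan is to produce an explicit matrix that the form $aX^2$ fails to represent, separating the trivial case $a=0$ from the main case $a\neq 0$. If $a=0$, then $aX^2$ is the zero matrix for every $X\in M_2(F)$, so the form represents only $0$ and is certainly not universal. So assume $a\neq 0$. The key observation in this case is that the set of matrices represented by $aX^2$ is exactly $a$ times the set of squares in $M_2(F)$; since $a$ is a unit, $aX^2$ is universal if and only if every matrix in $M_2(F)$ is a square. Thus it suffices to exhibit a single $2\times 2$ matrix that is not a square.

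I would take the nonzero nilpotent matrix $N=\left(\begin{smallmatrix} 0 & 1 \\ 0 & 0\end{smallmatrix}\right)$ and show there is no $X\in M_2(F)$ with $X^2=N$; equivalently, the matrix $aN$ is not represented by $aX^2$, which settles the theorem. The argument that $N$ is not a square is the crux, and it rests on a nilpotency obstruction. Suppose $X^2=N$. Since $N^2=0$, we obtain $X^4=0$, so $X$ is nilpotent. For a $2\times 2$ matrix over a field, nilpotency forces $X^2=0$: the characteristic polynomial of a nilpotent $2\times 2$ matrix is $t^2$, so Cayley--Hamilton gives $X^2=0$. This yields $N=X^2=0$, contradicting $N\neq 0$.

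Alternatively, and without passing through nilpotency of $X$ directly, one can apply Cayley--Hamilton in the form $X^2=(\operatorname{tr} X)X-(\det X)I$. From $\det(X^2)=\det N=0$ one gets $(\det X)^2=0$, hence $\det X=0$, so $X^2=(\operatorname{tr} X)X$. Taking traces then gives $(\operatorname{tr} X)^2=\operatorname{tr}(X^2)=\operatorname{tr} N=0$, whence $\operatorname{tr} X=0$ and again $N=X^2=0$, a contradiction. Either route is valid in every characteristic.

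I do not anticipate a serious obstacle here, since this is the easy direction supporting the main theorem; the only points requiring care are the separate treatment of $a=0$ and the verification that the nilpotency argument is characteristic-free, both handled above. I would also remark that this argument uses no hypothesis on $F$ beyond its being a field, so the perfectness assumption plays no role in this particular theorem.
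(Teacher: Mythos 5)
Your proof is correct, and it uses the same witness as the paper: the nilpotent matrix $\left[\begin{smallmatrix} 0 & 1 \\ 0 & 0 \end{smallmatrix}\right]$ (the paper asks when $aX^2$ equals this matrix; you equivalently ask when $X^2=N$, which is the same question up to the unit $a$). The difference lies only in the verification that no such $X$ exists. The paper writes out the four entry equations $a(x^2+yz)=0$, $ay(x+w)=1$, $az(x+w)=0$, $a(yz+w^2)=0$ and derives $z=0$, then $x=w=0$, contradicting the second equation. You instead invoke Cayley--Hamilton: any $2\times 2$ nilpotent matrix satisfies $X^2=0$, so a square root of a nonzero nilpotent cannot exist. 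Your route is coordinate-free, visibly characteristic-independent, and generalizes immediately (e.g., it shows no nonzero nilpotent $2\times 2$ matrix of square zero is a square), while the paper's computation is more elementary and self-contained, requiring nothing beyond solving the linear system. Your closing observation that perfectness of $F$ is irrelevant here is accurate and consistent with the paper, which likewise uses no such hypothesis in this theorem.
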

\begin{proof}
If $a=0$ then clearly it is not universal over $M_2(F)$.  So we suppose that $a\neq 0$. Consider 
$a\left[\begin{array}{cc} x & y \\ z & w \end{array} \right]^2=\left[\begin{array}{cc} 0 & 1 \\ 0 & 0 \end{array} \right]$. Then we get the following system of equations
\begin{eqnarray}
a(x^2+yz)=0 \label{eq:201} \\
ay(x+w)=1 \label{eq:202} \\
az(x+w)=0 \label{eq:203} \\
a(yz+w^2)=0. \label{eq:204} 
\end{eqnarray}
From Equation (\ref{eq:202}) and (\ref{eq:203}), we get $z=0$. Then from Equation (\ref{eq:201}) and (\ref{eq:204}), we get $x=0$ and $w=0$, a contradiction to Equation (\ref{eq:202}).
\end{proof}

\begin{T} \label{T3}
A quadratic form $a_1X_1^2+a_2X_2^2$ where $a_1,a_2\in F$, char $F\neq 2$ and both $a_1,a_2\neq 0$, is universal over $M_2(F)$.
\end{T}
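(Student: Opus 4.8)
The plan is to represent an arbitrary target $B\in M_2(F)$ in the asymmetric shape $B=a_1X_1^2+\lambda I$, and then absorb the scalar part into the second square. The second step is essentially free: an off-diagonal trace-zero matrix satisfies $\left[\begin{array}{cc}0&u\\v&0\end{array}\right]^2=uv\,I$, so $a_2X_2^2$ can be made equal to $\lambda I$ for \emph{any} $\lambda\in F$ (take $u=\lambda/a_2$, $v=1$; here $a_2\neq 0$ is used). Thus the whole statement reduces to a single assertion: for every $B$ there is a scalar $\lambda$ such that $A:=\tfrac{1}{a_1}(B-\lambda I)$ is a perfect square in $M_2(F)$. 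I would note in passing that this direct route is self-contained and does not need the isotropic-plus-nondegenerate universality result quoted earlier for forms over $F$.

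The engine for recognizing squares is the Cayley--Hamilton identity $X^2=(\operatorname{tr}X)X-(\det X)I$. First I would record a square-root lemma: if $A\in M_2(F)$ and $\delta,\theta\in F$ satisfy $\delta^2=\det A$, $\theta^2=\operatorname{tr}A+2\delta$ and $\theta\neq 0$, then $X:=\tfrac{1}{\theta}(A+\delta I)$ squares to $A$. Indeed a short trace/determinant computation gives $\operatorname{tr}X=\theta$ and $\det X=\delta$, whence $X^2=\theta X-\delta I=A$. So producing a square root of $A$ amounts to exhibiting compatible field elements: a square root $\delta$ of $\det A$, and a \emph{nonzero} square root $\theta$ of $\operatorname{tr}A+2\delta$.

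The heart of the argument, and the step I expect to be the main obstacle, is to choose $\lambda$ so that \emph{both} square roots above exist simultaneously. Writing $s=\operatorname{tr}B$ and $p=\det B$, one computes $\det A=\tfrac{1}{a_1^2}(\lambda^2-s\lambda+p)$ and $\operatorname{tr}A=\tfrac{1}{a_1}(s-2\lambda)$. The trick is to introduce a parameter $k$ and to solve $\lambda^2-s\lambda+p=(\lambda+k)^2$, which forces $\lambda=\tfrac{p-k^2}{s+2k}$ (legitimate as long as $s+2k\neq 0$) and makes $\delta:=\tfrac{\lambda+k}{a_1}$ a genuine square root of $\det A$. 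Feeding this back yields the clean identity $\operatorname{tr}A+2\delta=\tfrac{s+2k}{a_1}$. I would then choose $k$ to discharge the remaining square-root condition outright, namely $k=\tfrac{a_1-s}{2}$, so that $s+2k=a_1$; this makes $\operatorname{tr}A+2\delta=1$, so $\theta=1\neq 0$ works, and it simultaneously guarantees $s+2k=a_1\neq 0$. With these choices $X_1=A+\delta I$ together with the $X_2$ above solve $a_1X_1^2+a_2X_2^2=B$.

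Two points deserve attention. The construction uses $\operatorname{char}F\neq 2$ only through the divisions by $2$, $a_1$, $a_2$, and it never needs $F$ to be perfect nor to extract any nontrivial square root from $F$: the parameter $k$ is calibrated precisely so that the one square-root condition that could fail is forced to equal $1$. It also subsumes scalar $B$ with no separate case, since all formulas remain valid when $B-\lambda I$ is scalar. The only genuine delicacy is the bookkeeping in checking $\delta^2=\det A$ and $\operatorname{tr}X=\theta$, $\det X=\delta$; these verifications are short but must be carried out with care to confirm that the single parameter $k$ really settles both the determinant and the trace conditions at once.
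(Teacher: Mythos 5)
Your proof is correct; I checked the square-root lemma (via $\det(A+\delta I)=\det A+\delta\operatorname{tr}A+\delta^2$ and Cayley--Hamilton) and the calibration $k=\tfrac{a_1-s}{2}$, which indeed forces $s+2k=a_1\neq 0$ and $\operatorname{tr}A+2\delta=1$, so every division you perform is legitimate. Interestingly, your solution lands in the same structural shape as the paper's: there, too, the second matrix is taken off-diagonal (the paper sets $x_2=w_2=0$ and $z_2=1$), so $a_2X_2^2$ contributes only a scalar multiple of the identity, and in the generic case the paper's first matrix also has trace $1$ (it imposes $x_1+w_1=1$). But the derivations are genuinely different. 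The paper expands everything into the four scalar equations for the entries, solves $a_1x_1^2-a_1w_1^2=p-s$ by hand with a case split on $p-s=0$ versus $p-s\neq 0$, and then reads off $y_1,z_1,y_2$ by successive division; this is entirely elementary and yields the explicit formulas that feed the algorithm in Section 3. You instead work coordinate-free: the Cayley--Hamilton square-root criterion isolates the one genuine obstruction (a nonzero square root of $\operatorname{tr}A+2\delta$), and the parameter $k$ is tuned so that this obstruction evaluates to $1$. Your route avoids the case split, explains conceptually why no perfectness or quadratic extension of $F$ is ever needed in characteristic $\neq 2$, and would generalize more readily (e.g.\ to recognizing which matrices are squares); the paper's route is more computational but produces ready-to-implement closed-form entries. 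Both are valid proofs of the theorem.
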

\begin{proof}
Consider $a_1\left[\begin{array}{cc} x_1 & y_1 \\ z_1 & w_1 \end{array} \right]^2+a_2\left[\begin{array}{cc} x_2 & y_2 \\ z_2 & w_2 \end{array} \right]^2=\left[\begin{array}{cc} p & q \\ r & s \end{array} \right]$\\
which gives us the following system of equations
\begin{eqnarray}
a_1x_1^2+a_2x_2^2+a_1y_1z_1+a_2y_2z_2=p \label{eq:1} \\
a_1y_1(x_1+w_1)+a_2y_2(x_2+w_2)=q \label{eq:2} \\
a_1z_1(x_1+w_1)+a_2z_2(x_2+w_2)=r \label{eq:3} \\
a_1w_1^2+a_2w_2^2+a_1y_1z_1+a_2y_2z_2=s. \label{eq:4}
\end{eqnarray}
From Equation (\ref{eq:1}) and (\ref{eq:4}), we get 
\begin{equation}\label{eq:8}
a_1y_1z_1+a_2y_2z_2=p-(a_1x_1^2+a_2x_2^2)=s-(a_1w_1^2+a_2w_2^2).
\end{equation}
We will find a solution to the equation 
\begin{equation}
p-(a_1x_1^2+a_2x_2^2)=s-(a_1w_1^2+a_2w_2^2)\label{eq:100}
\end{equation}
such that $x_1+w_1 \neq 0.$
If we do so, as $a_1\neq 0$ and $a_2\neq 0$, from Equations (\ref{eq:2}) and (\ref{eq:3}), we get the values of $y_1$ and $z_1$ as 
\begin{equation}
y_1=\dfrac{q-a_2y_2(x_2+w_2)}{a_1(x_1+w_1)} \label{eq:5}
\end{equation}
\begin{equation}
z_1=\dfrac{r-a_2z_2(x_2+w_2)}{a_1(x_1+w_1)}. \label{eq:6}
\end{equation}
Now consider Equation (\ref{eq:100}) in the form $a_1x_1^2+a_2x_2^2-a_1w_1^2-a_2w_2^2=p-s$ and choose $x_2 = w_2 = 0.$
Thus, our equation now reads $a_1x_1^2 - a_1w_1^2 = p-s.$ 
{\rm Case}(I): If $p-s=0$, as $a_1\neq 0$ take $x_1=w_1$. Since char $F\neq 2$ by selecting any nonzero value for $x_1$
we have $x_1+w_1\neq 0.$

{\rm Case}(II): If $p-s\neq 0$, we get $x_1^2-w_1^2=\dfrac{p-s}{a_1}$ i.e. $(x_1-w_1)(x_1+w_1)=\dfrac{p-s}{a_1}$.
We shall choose $x_1$,$w_1$ such that $x_1-w_1=\dfrac{p-s}{a_1}$ and $x_1+w_1=1.$ To see this, adding the two equations
we get $2x_1=\dfrac{p-s+a_1}{a_1}$ i.e. $x_1=\dfrac{p-s+a_1}{2a_1}$ and $w_1=1-x_1=\dfrac{a_1-(p-s)}{2a_1}$.\\
From each of the cases above, we have $x_1+w_1\neq 0$. Now from Equation (\ref{eq:5}) and (\ref{eq:6}),
since we have chosen $x_2 = w_2 = 0,$ we have the following values of $y_1$ and $z_1:$ 
\begin{eqnarray}
y_1=\dfrac{q}{a_1(x_1+w_1)} \label{eq:9}\\
z_1=\dfrac{r}{a_1(x_1+w_1)}. \label{eq:10}
\end{eqnarray}
Now from Equation (\ref{eq:8}), we get 
\begin{equation}\label{eq:111}
a_2y_2z_2=-a_1y_1z_1+s-a_1w_1^2
\end{equation}
\begin{equation}\label{eq:112}
\text{i.e.} \; y_2z_2=\dfrac{1}{a_2}(-a_1y_1z_1+s-a_1w_1^2).
\end{equation}
Then by fixing any non-zero value of $z_2$ we get the value of $y_2$. Hence, we get a solution to the system of equations.
\end{proof}

\begin{D}
A field $F$ is said to be perfect if it either has characteristic $0$ or when characteristic $p>0$ then every element of $F$ is a $p^{th}$ power i.e. $F^p=F$ where $F^p=\{x^p:x\in F\}$.
\end{D}

\begin{Lem} \label{L1}
Let $F$ be a perfect field of characteristic $2.$ 
Then the map $f:F\rightarrow F$ defined by $f(x)=x^2$ is an isomorphism.
\end{Lem}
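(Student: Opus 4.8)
The plan is to show that $f$ is a bijective homomorphism of fields; since $f$ is the Frobenius map in characteristic $2$, the real content of the lemma is that perfectness forces surjectivity.

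First I would verify that $f$ is a ring homomorphism. Multiplicativity is immediate from $f(xy) = (xy)^2 = x^2y^2 = f(x)f(y)$, together with $f(1) = 1$. The key step is additivity: expanding $(x+y)^2 = x^2 + 2xy + y^2$ and using that $2 = 0$ in $F$ gives $f(x+y) = x^2 + y^2 = f(x) + f(y)$. This ``freshman's dream'' is the one place where the characteristic $2$ hypothesis is genuinely needed for the homomorphism property, so I would flag it as the main (if modest) point of the argument.

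Next I would establish injectivity. The quickest route is that the kernel of a homomorphism from a field is a proper ideal of $F$ and hence equals $\{0\}$, since $f(1) = 1 \neq 0$. Alternatively one argues directly: if $x^2 = y^2$ then $(x+y)^2 = x^2 + y^2 = 0$, and since a field has no nonzero nilpotents this forces $x + y = 0$, i.e.\ $x = y$ in characteristic $2$.

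Finally, surjectivity is exactly the perfectness hypothesis: by definition a perfect field of characteristic $2$ satisfies $F^2 = F$, so every $a \in F$ is of the form $a = x^2 = f(x)$ for some $x \in F$. Combining the three steps, $f$ is a bijective field homomorphism and hence an isomorphism. I expect no serious obstacle here; the only subtlety is conceptual, namely recognizing that ``Frobenius is an isomorphism'' is essentially the definition of a perfect field of positive characteristic.
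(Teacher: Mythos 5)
Your proposal is correct and follows essentially the same route as the paper: verify that the Frobenius map is a ring homomorphism via the freshman's dream in characteristic $2$, deduce injectivity from the fact that a nonzero homomorphism out of a field has trivial kernel, and obtain surjectivity directly from the perfectness hypothesis $F^2 = F$. Your additional direct injectivity argument via $(x+y)^2 = 0$ is a harmless elaboration of the same idea.
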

\begin{proof}
  We have $f(x+y)=(x+y)^2=x^2+2xy+y^2=x^2+y^2=f(x)+f(y)$ and $f(xy)=x^2y^2=f(x)f(y)$. As $f$ is a non-zero homomorphism it implies that $f$ is injective (see \cite{Dummit}). Also $F$ is a perfect field of char $F=2$ which implies $F = F^2$ and hence $f$ is onto.
  Therefore, $f$ is an isomorphism.
\end{proof}

\begin{T} \label{T5}
A quadratic form $a_1X_1^2+a_2X_2^2$ where $a_1,a_2\in F$, $F$ is a perfect field of char $F=2$ and both $a_1,a_2\neq 0$ is universal over $M_2(F)$.
\end{T}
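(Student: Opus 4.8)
The plan is to follow the proof of Theorem~\ref{T3} almost verbatim, replacing each appeal to $\mathrm{char}\,F\neq 2$ (``dividing by $2$'', ``completing the square'') by the existence and uniqueness of square roots supplied by Lemma~\ref{L1}. Concretely, I would again try to solve
\[
a_1\left[\begin{array}{cc} x_1 & y_1 \\ z_1 & w_1 \end{array}\right]^2 + a_2\left[\begin{array}{cc} x_2 & y_2 \\ z_2 & w_2 \end{array}\right]^2 = \left[\begin{array}{cc} p & q \\ r & s \end{array}\right],
\]
which yields the same four scalar equations as in Theorem~\ref{T3}. Subtracting the $(1,1)$ and $(2,2)$ equations and using that $-1=1$ and $x_i^2+w_i^2=(x_i+w_i)^2$ in characteristic $2$, the compatibility relation collapses to
\[
a_1(x_1+w_1)^2 + a_2(x_2+w_2)^2 = p+s.
\]
Exactly as before, if I can satisfy this relation while keeping $x_1+w_1\neq 0$, then the two off-diagonal equations determine $y_1$ and $z_1$, the $(1,1)$ equation absorbs the remaining freedom, and the $(2,2)$ equation holds automatically.

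I would then split into the cases $p\neq s$ and $p=s$. When $p\neq s$ (so $p+s\neq 0$) I set $x_2=w_2=0$; the relation reduces to $a_1(x_1+w_1)^2=p+s$, and Lemma~\ref{L1} provides the square root $x_1+w_1=\sqrt{(p+s)/a_1}$, which is nonzero precisely because $p+s\neq 0$. With the sum $x_1+w_1$ fixed, I would read off $y_1,z_1$ from the off-diagonal equations, choose $x_1$ freely (which determines $w_1$ from the fixed sum), and then pick $y_2,z_2$ with $z_2\neq 0$ to satisfy the $(1,1)$ equation, exactly as at the end of Theorem~\ref{T3}. This is the characteristic-$2$ analogue of its Case~(II), with a square root standing in for the factorization of a difference of squares.

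The step I expect to be the genuine obstacle is the case $p=s$, which is exactly where the argument must break away from Theorem~\ref{T3}. There this case (its Case~(I)) is handled by taking $x_1=w_1\neq 0$, so that $x_1+w_1=2x_1\neq 0$; in characteristic $2$ the same choice forces $x_1+w_1=0$, turning the off-diagonal equations into $q=0$ and $r=0$, so setting $x_2=w_2=0$ is no longer available. The remedy is to activate the second matrix: with $p+s=0$ the relation reads $a_1(x_1+w_1)^2=a_2(x_2+w_2)^2$, so by Lemma~\ref{L1} I may take $x_2+w_2=1$ and $x_1+w_1=\sqrt{a_2/a_1}$, which is nonzero since $a_1,a_2\neq 0$. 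With $x_1+w_1\neq 0$ secured, the off-diagonal equations again solve for $y_1,z_1$ (taking $y_2=z_2=0$), and a single square root from Lemma~\ref{L1} lets me choose $x_1$ so that the $(1,1)$ equation holds, after which the $(2,2)$ equation follows from the relation above. Thus the only real novelty over Theorem~\ref{T3} is recognizing that the $p=s$ case must be rescued by the second matrix rather than by halving.
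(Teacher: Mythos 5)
Your proof is correct and follows essentially the same route as the paper: the same reduction to $a_1(x_1+w_1)^2+a_2(x_2+w_2)^2=p+s$, the same case split on whether $p+s$ vanishes, and in the $p+s=0$ case the same rescue by the second matrix via $x_2+w_2=1$ and $a_1(x_1+w_1)^2=a_2$. If anything, your completion of that case (setting $y_2=z_2=0$ and spending one more square root on $x_1$ to satisfy the $(1,1)$ equation, then checking the $(2,2)$ equation) is more careful than the paper's bare ``further following the proof of Theorem~\ref{T3},'' which does not literally apply there because $x_2+w_2\neq 0$ couples $y_1,z_1$ to $y_2,z_2$ in the off-diagonal equations.
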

\begin{proof}
Consider the following system of equations
\begin{eqnarray}
a_1x_1^2+a_2x_2^2+a_1y_1z_1+a_2y_2z_2=p \label{eq:11} \\
a_1y_1(x_1+w_1)+a_2y_2(x_2+w_2)=q \label{eq:12} \\
a_1z_1(x_1+w_1)+a_2z_2(x_2+w_2)=r \label{eq:13} \\
a_1w_1^2+a_2w_2^2+a_1y_1z_1+a_2y_2z_2=s. \label{eq:14}
\end{eqnarray}
From Equations (\ref{eq:11}) and (\ref{eq:14}), we will find solution to the equation 
\begin{equation} \label{eq:15}
a_1x_1^2+a_2x_2^2-a_1w_1^2-a_2w_2^2=p-s.
\end{equation}
Since char $F=2$, we may consider the following equation
\begin{equation} \label{eq:16}
a_1x_1^2+a_2x_2^2+a_1w_1^2+a_2w_2^2=p+s.
\end{equation}
Case (I): If $p+s=0$ then take $x_2=w_1=0$, $w_2=1$ and then from Lemma \ref{L1} as $a_1\neq 0$ and $a_2\neq 0$, $a_1x_1^2=a_2$ has a non-zero solution.\\
Case (II): If $p+s\neq 0$ then take $x_2=w_1=w_2=0$ then from Lemma \ref{L1}, $a_1x_1^2=p+s$ has a non-zero solution in $F$ as $a_1\neq 0$ and $p+s\neq 0$.\\
From each of the above cases, we have $x_1+w_1\neq 0$ and further following the proof of Theorem \ref{T3}, we have a solution to the required system of equations.
\end{proof}

\begin{T} \label{T6}
A diagonal quadratic form $\sum_{i=1}^{m}a_iX_i^2$ over a perfect field $F$ is universal over $M_2(F)$ if and only if atleast two elements of $a_1,a_2,\ldots,a_m$ are nonzero.
\end{T}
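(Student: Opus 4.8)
The plan is to prove both directions of the equivalence by reducing everything to the two-variable case already settled in Theorems \ref{T2}, \ref{T3}, and \ref{T5}; Theorem \ref{T6} should then be essentially a packaging statement rather than a new computation.

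For the necessity direction I would argue by contrapositive: assume at most one of $a_1, \dots, a_m$ is nonzero and show the form is not universal. If every $a_i$ vanishes, the form is identically the zero matrix and represents only $0$. If exactly one coefficient, say $a_j$, is nonzero, then the remaining monomials contribute nothing, so the set of matrices represented by $\sum_i a_i X_i^2$ coincides with the set represented by the single-square form $a_j X^2$. By Theorem \ref{T2} this form is not universal over $M_2(F)$, and hence neither is the original.

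For the sufficiency direction I would use the hypothesis that at least two coefficients are nonzero: after relabeling, assume $a_1, a_2 \neq 0$. Given an arbitrary target $B \in M_2(F)$, I set $X_k$ equal to the zero matrix for every $k \geq 3$, which reduces the problem to solving $a_1 X_1^2 + a_2 X_2^2 = B$. A case split on the characteristic then closes the argument: when $\mathrm{char}\, F \neq 2$, Theorem \ref{T3} shows this binary form is universal over $M_2(F)$; when $\mathrm{char}\, F = 2$, the perfectness hypothesis lets me invoke Theorem \ref{T5} instead. In either case $B$ is represented, so the full $m$-ary form is universal.

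The conceptual work has already been absorbed into the binary results, so for Theorem \ref{T6} itself there is no genuine obstacle beyond this bookkeeping. The one point deserving care is the characteristic-$2$ case, where universality of even the binary form relies essentially on $F$ being perfect, via the surjectivity of $x \mapsto x^2$ established in Lemma \ref{L1}. This is precisely where the perfectness assumption is consumed, and it is worth flagging that the reduction would break down without it.
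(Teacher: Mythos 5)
Your proposal is correct and follows essentially the same route as the paper: set the extra variables to zero and invoke Theorems \ref{T3} and \ref{T5} for sufficiency, and reduce to Theorem \ref{T2} for necessity. Your write-up is in fact slightly more careful than the paper's, since you explicitly handle the all-coefficients-zero case and note that a single nonzero coefficient reduces exactly to the one-square form.
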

\begin{proof}
Without loss of generality let $a_1, a_2\neq 0$ and $X_i=0$ for $3\leq i \leq m$. Then from Theorem \ref{T3} and \ref{T5}, $\sum_{i=1}^{m}a_iX_i^2$ is universal. Conversely, suppose $\sum_{i=1}^{m}a_iX_i^2$ is universal then from Theorem \ref{T2}, we have the result.
\end{proof}

\begin{Rem}
  We demonstrate the importance of a perfect field (see \cite{Law}) by showing that the theorem may fail, if this condition does not hold. 
  Consider the field $F = \mathbb{Z}_2(X).$ This field is not perfect since $X$ is not a square in $\mathbb{Z}_2(X);$
  for suppose $X=\dfrac{(a_0+a_1X+a_2X^2+\cdots+X^k)^2}{(b_0+b_1X+b_2X^2+\cdots+X^t)^2}$ which implies that
  $X(b_0+b_1X+b_2X^2+\cdots+X^t)^2=(a_0+a_1X+a_2X^2+\cdots+X^k)^2$. By comparing degrees, we get $2t+1=2k$, a contradiction. 
  We now prove that the quadratic form $X_1^2+X_2^2$ is not universal over $M_2(\mathbb{Z}_2(X)).$
  To see this, consider 
\begin{center}
  $\left[\begin{array}{cc} x_1 & y_1 \\ z_1 & w_1 \end{array} \right]^2+\left[\begin{array}{cc} x_2 & y_2 \\ z_2 & w_2
    \end{array} \right]^2=\left[\begin{array}{cc} p & q \\ r & s \end{array} \right]$
\end{center}
which gives us the following system of equations: 
\begin{eqnarray}
x_1^2+x_2^2+y_1z_1+y_2z_2=p \label{eq:19} \\
y_1(x_1+w_1)+y_2(x_2+w_2)=q \label{eq:20} \\
z_1(x_1+w_1)+z_2(x_2+w_2)=r \label{eq:21} \\
w_1^2+w_2^2+y_1z_1+y_2z_2=s. \label{eq:22}
\end{eqnarray}
Existence of solution to the equation 
\begin{equation}\label{eq:23}
x_1^2+x_2^2+w_1^2+w_2^2=p+s
\end{equation}
is necessary for the existence of a solution to the original system of equations. Since Char $F=2$, Equation (\ref{eq:23}) can be written as 
\begin{equation} \label{eq:24}
(x_1+x_2+w_1+w_2)^2=p+s.
\end{equation}
Since $\mathbb{Z}_2(X)$ is not a perfect field the equation $x^2=a$ need not have a solution for every $a \in \mathbb{Z}_2(X).$
(For example, $a = X$ works as shown above.) This proves that $X_1^2+X_2^2$ is not universal over $M_2(\mathbb{Z}_2(X)).$  
\end{Rem}

\section{Algorithm of the proof}
For a given matrix $A$ over $\mathbb{Q}$ and $a_1,a_2\in\mathbb{Q}$, following is the algorithm to find entries of the matrices $X_1$ and $X_2$ such that $a_1X_1^2+a_2X_2^2=A$ and the program can be implemented in Python language.\\\\
1.	Import the packages numpy and fraction for declaring and displaying the array values. \\
2.	Declare a two dimensional array A \\
3.	p=A[0][0] \\
4.	q=A[0][1] \\
5.	r=A[1][0] \\
6.	s=A[1][1] \\
7.	Initialize the values a1, a2. \\
8.	Initialize x2=0, w2=0 and z2=1. \\
9.	if p-s==0: \\
10.	   x1=w1=1 \\
11.	   y1=q/(a1*(x1+w1)) \\
12.	   z1=r/(a1*(x1+w1)) \\
13.	   y2=(s-a1*y1*z1-a1*pow(w1,2))/(a2) \\
14.	Display y1,y2 and z1 \\
15.	else: \\
16.	    x1=(p-s+a1)/(2*a1) \\
17.	    w1=(a1-p+s)/(2*a1) \\
18.	    y1=q/(a1*(x1+w1)) \\
19.	    z1=r/(a1*(x1+w1)) \\
20.	    y2=(s-a1*y1*z1-a1*pow(w1,2))/(a2) \\
21.	Display x1, w1, y1, y2, z1 \\

\begin{Exa}
Consider $F=\mathbb{Q}$ and a diagonal quadratic form $q(X_1,X_2)=2X_1^2+X_2^2$. We have $\text{char}\;F=0$. Let $A=\left[\begin{array}{cc} \frac{1}{5} & 2 \\ 0 & -1 \end{array}\right]\in M_2(\mathbb{Q})$. By following the steps of Theorem \ref{T3}, we will find a solution to equation
\begin{equation}
2\left[\begin{array}{cc} x_1 & y_1 \\ z_1 & w_1 \end{array}\right]^2+\left[\begin{array}{cc} x_2 & y_2 \\ z_2 & w_2 \end{array}\right]^2=\left[\begin{array}{cc} \frac{1}{5} & 2 \\ 0 & -1 \end{array}\right].
\end{equation}
Let $a_1=2,a_2=1,p=\dfrac{1}{5},q=2,r=0,s=-1$. Take $x_2=w_2=0$ and $z_2=1$. We have $p-s=\dfrac{6}{5} \neq 0$. We get $x_1=\dfrac{p-s+a_1}{2a_1}=\dfrac{4}{5}$ and $w_1=\dfrac{a_1-(p-s)}{2a_1}=\dfrac{1}{5}$. Then we get $y_1=\dfrac{q}{a_1(x_1+w_1)}=1$, $z_1=\dfrac{r}{a_1(x_1+w_1)}=0$ and $y_2=\dfrac{1}{a_2}(-a_1y_1z_1+s-a_1w_1^2)=\dfrac{-27}{25}$.
\end{Exa}

\medskip

\noindent
    {\bf Acknowledgement:} We sincerely thank Professor S. Kotyada for useful discussions. We also thank Professor S. A. Katre
    for all his support and encouragement and Ms. Nivedita Elanshekar for helping in computational part.

{\it ~ Email Address}:~murtuza.ibrahim@gmail.com,~anuradha.garge@gmail.com

\end{document}